\newtheorem{theorem}{Theorem}[section]
\newtheorem{definition}[theorem]{Definition}
\newtheorem{proposition}[theorem]{Proposition}
\newtheorem{corollary}[theorem]{Corollary}
\newtheorem{lemma}[theorem]{Lemma}
\newtheorem{remark}[theorem]{Remark}
\newtheorem{problem}[theorem]{Problem}
\newcommand{\cali}[1]{\mathscr{#1}}
\newcommand{\U}{{\rm U}}
\newcommand{\ddc}{dd^c}
\newcommand{\prim}{{\rm prim}}
\newcommand{\dbar}{\overline\partial}
\newcommand{\Ec}{\cali{E}}
\newcommand{\C}{\mathbb{C}}
\newcommand{\R}{\mathbb{R}}
\newcommand{\transposee}[1]{{\vphantom{#1}}^{\mathit t}{#1}}
\title{On the Lefschetz and Hodge-Riemann theorems}
\author{Tien-Cuong Dinh and Vi{\^e}t-Anh Nguy{\^e}n}
\begin{document}

\maketitle

\begin{abstract}
We give an abstract version of the hard Lefschetz theorem, 
the Lefschetz decomposition and the Hodge-Riemann theorem 
for compact K{\"a}hler manifolds. 
\end{abstract}

\noindent
{\bf Classification AMS 2010}: 32Q15, 14C30, 58A14.

\noindent
{\bf Keywords:} hard Lefschetz theorem, Lefschetz decomposition, Hodge-Riemann theorem.

\section{Introduction} \label{introduction}

Let $X$ be a compact K{\"a}hler manifold of dimension $n$ and let $\omega$ be a
K{\"a}hler form on $X$. Denote by $H^{p,q}(X,\C)$
the Hodge cohomology group of bidegree $(p,q)$ of $X$ with the
convention that $H^{p,q}(X,\C)=0$ unless $0\leq p,q\leq n$. 
When $p,q\geq 0$ and $p+q\leq n$, 
put $\Omega:=\omega^{n-p-q}$ and define a Hermitian 
form $Q$ on $H^{p,q}(X,\C)$ by
$$Q(\{\alpha\},\{\beta\}):= i^{q-p}(-1)^{(p+q)(p+q+1)\over 2}
\int_X\alpha\wedge\overline\beta\wedge\Omega$$
for smooth closed $(p,q)$-forms $\alpha$ and $\beta$. The last integral depends only on the classes $\{\alpha\}, \{\beta\}$ of $\alpha, \beta$ in $H^{p,q}(X,\C)$. 

The classical Hodge-Riemann theorem asserts that $Q$ is positive-definite on the primitive subspace 
$H^{p,q}(X,\C)_\prim$ of $H^{p,q}(X,\C)$ which is given by
$$H^{p,q}(X,\C)_\prim:=\big\{\{\alpha\}\in H^{p,q}(X,\C),
\quad \{\alpha\}\smile \{\Omega\}\smile \{\omega\}=0\big\},$$
where $\smile$ denotes the cup-product on the cohomology 
ring $\oplus H^*(X,\C)$, see e.g. Demailly \cite{bdi}, 
Griffiths-Harris \cite{GriffithsHarris} and Voisin \cite{Voisin}. 

The Hodge-Riemann theorem implies the hard Lefschetz theorem which says that the linear map $\{\alpha\}\mapsto \{\alpha\}\smile \{\Omega\}$ defines an isomorphism between $H^{p,q}(X,\C)$ and $H^{n-q,n-p}(X,\C)$. It also implies the following Lefschetz decomposition
$$H^{p,q}(X,\C)=\{\omega\}\smile H^{p-1,q-1}(X,\C)\oplus H^{p,q}(X,\C)_\prim$$
which is orthogonal with respect to the Hermitian form
$Q$.  
Moreover, we easily obtain from the above theorems the signature of $Q$ in term of the Hodge numbers $h^{p,q}:=\dim H^{p,q}(X,\C)$. For example, when $p=q=1$ the signature of $Q$ is equal to $(h^{1,1}-1,1)$.  

The above three theorems are not true if we replace $\{\Omega\}$ with an arbitrary class in 
$H^{n-p-q,n-p-q}(X,\R)$, even when the class contains a strictly positive form, see e.g. Berndtsson-Sibony \cite[\S 9]{BerndtssonSibony}.
Our aim here is to give sufficient conditions on 
$\{\Omega\}$ for which 
these theorems still hold. We will say that such a class $\{\Omega\}$ satisfies the Hodge-Riemann theorem, the hard Lefschetz theorem and the Lefschetz decomposition theorem for the bidegree $(p,q)$.

If $E$ is a complex vector space of dimension $n$ and $\overline E$
its complex conjugate, we will introduce in the next section the
notion of  Hodge-Riemann cone in the exterior product
$\bigwedge^kE\otimes\bigwedge^k\overline E$ with $0\leq k\leq n$, see Definition \ref{def_hr} below. In practice, $E$ is the complex cotangent space at a point $x$ of $X$ and we obtain a Hodge-Riemann cone associated with $X$. Here is our main result.

\begin{theorem} \label{th_main_1}
Let $(X,\omega)$ be a compact K{\"a}hler manifold of dimension $n$. 
Let $p,q$ be non-negative integers such that $p+q\leq n$ and 
$\Omega$ a closed smooth form of bidegree $(n-p-q,n-p-q)$ on 
$X$. Assume that $\Omega$  takes values only 
in the Hodge-Riemann cones associated 
with $X$. Then $\{\Omega\}$ satisfies the Hodge-Riemann theorem, 
the hard Lefschetz theorem and the Lefschetz decomposition theorem for the bidegree $(p,q)$. 
\end{theorem}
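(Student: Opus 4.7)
The plan is to deform $\Omega$ to the model form $\omega^{n-p-q}$ along the affine family $\Omega_t:=(1-t)\,\omega^{n-p-q}+t\,\Omega$, $t\in[0,1]$, and to run an open/closed continuity argument. By the convexity of the Hodge-Riemann cone (to be established in Section~2) and the fact that $\omega^{n-p-q}$ lies in it, every $\Omega_t$ is a closed smooth $(n-p-q,n-p-q)$-form whose values stay in the cone at every point of $X$, so the hypothesis of the theorem is preserved along the deformation.

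At the cohomological level, I would first observe that the three assertions reduce to the positive-definiteness of the Hermitian form $Q_t$ on the primitive subspace
$$V_t:=\big\{\{\alpha\}\in H^{p,q}(X,\C)\,:\,\{\alpha\}\smile\{\Omega_t\}\smile\{\omega\}=0\big\}.$$
Indeed, if $Q_t$ is positive-definite on $V_t$, any class in the kernel of the Lefschetz map $L_t:\{\beta\}\mapsto\{\beta\}\smile\{\Omega_t\}$ automatically lies in $V_t$ with vanishing $Q_t$-norm and must therefore be zero; surjectivity of $L_t$ then follows from Poincar\'e duality since $h^{p,q}=h^{n-q,n-p}$; and the Lefschetz decomposition is the $Q_t$-orthogonal splitting of $H^{p,q}(X,\C)$ into $\{\omega\}\smile H^{p-1,q-1}(X,\C)$ and $V_t$. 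It therefore suffices to prove that the set $T\subset[0,1]$ for which $Q_t$ is positive-definite on $V_t$ coincides with $[0,1]$; the classical Hodge-Riemann theorem supplies $0\in T$.

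Openness of $T$ is a soft consequence of continuity: the family $L_t$ depends continuously on $t$, bijectivity is an open condition, and once $L_t$ is bijective the subspace $V_t$ varies continuously in the Grassmannian while $Q_t|_{V_t}$ varies continuously, so positive-definiteness persists.

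Closedness of $T$ is the principal obstacle, and this is where the algebraic content of the Hodge-Riemann cone must intervene: by its very definition the cone is tailored so that a $(p,q)$-covector $\alpha$ at a point $x$ satisfying the pointwise primitivity $\alpha\wedge\omega(x)\wedge\Omega_t(x)=0$ makes $i^{q-p}(-1)^{(p+q)(p+q+1)/2}\alpha\wedge\overline\alpha\wedge\Omega_t(x)$ non-negative. I would combine this with a pointwise algebraic Lefschetz decomposition with respect to $\omega$ and with harmonic projection on forms, so as to represent $Q_t$ on $V_t$ as the integral of a pointwise non-negative integrand plus cross terms that vanish cohomologically. Passing the resulting lower bound to $t\to t_0$ gives semi-definiteness of $Q_{t_0}$ on $V_{t_0}$, and the passage from semi- to strict positivity can be obtained either by invoking the local constancy of the signature of continuous families of non-degenerate Hermitian forms (with an induction on $p+q$) or from the algebraic reduction above applied in reverse. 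The genuinely delicate point is that $V_t$ and the pointwise notion of $\Omega_t$-primitivity both move with $t$, so no single K\"ahler-harmonic decomposition works uniformly; this is precisely where the internal algebraic structure of the Hodge-Riemann cone needs to be brought to bear.
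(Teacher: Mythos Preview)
Your continuity scheme has two genuine gaps, and the paper's proof proceeds along entirely different lines.

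First, the affine family $\Omega_t=(1-t)\omega^{n-p-q}+t\Omega$ need not stay in the Hodge-Riemann cone. The cone is defined as the connected component of $\omega^{k}$ in the open set of $(k,k)$-forms satisfying the isomorphism condition~$(*)$; nothing in the paper (or elsewhere) shows this component is convex or even star-shaped about $\omega^{k}$, and you do not indicate how you would prove it. More seriously, the hypothesis of the theorem only gives, at each point $x$, \emph{some} continuous path inside the cone from $\Omega(x)$ to $\omega(x)^{k}$; the paper explicitly stresses that this pointwise deformation need not depend continuously on $x$, nor preserve closedness or smoothness. So there is in general no global family of \emph{closed} forms $\Omega_t$ on $X$ to which your cohomological continuity argument could even be applied.

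Second, and more fundamental, your closedness step is exactly where the real work lies and you do not carry it out. A cohomologically primitive class $\{f\}\in V_t$ has no reason to admit a representative that is \emph{pointwise} $\Omega_t$-primitive, so one cannot simply integrate a pointwise non-negative density. You correctly identify this as ``the genuinely delicate point'' but offer no mechanism to bridge it. The paper resolves precisely this: using the pointwise estimate $\|\alpha\|^2\leq c_1 Q(\alpha,\alpha)+c_2\|\alpha\wedge\Omega\wedge\omega\|^2$ (Proposition~2.7), it sets up an $L^2$ bound that, via Hahn--Banach, produces $u\in L^2_{p-1,q-1}$ with $dd^c u\wedge\Omega\wedge\omega=f\wedge\Omega\wedge\omega$ (Proposition~3.1), then upgrades $dd^c u$ to a smooth form by elliptic regularity for a Laplacian built from a bespoke inner product adapted to $\Omega$ (Proposition~3.2). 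The corrected representative $f-dd^c v$ is then pointwise primitive, the algebraic Hodge--Riemann applies at every point, and Stokes recovers $Q(\{f\},\{f\})$. No global deformation of $\Omega$ is ever used; the deformation enters only at the linear-algebra level inside Proposition~2.7. Your sketch contains neither this analytic input nor a substitute for it.
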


Roughly speaking, the hypothesis of Theorem \ref{th_main_1} says 
that at every point $x$ of $X$, we can deform continuously $\Omega$ to 
$\omega^{n-p-q}$ in a ``nice way''. However, we do not need that the 
deformation depends continuously on $x$ and a priori the deformation 
does not preserve the closedness nor the smoothness of the form.

We deduce from Theorem \ref{th_main_1} the following corollary using a
result due to Timorin \cite{Timorin}, see Proposition \ref{prop_timorin} below.

\begin{corollary} \label{cor_hr_mixed}
Let $(X,\omega)$ be a compact K{\"a}hler manifold of dimension $n$. Let
$p,q$ be non-negative integers such that $p+q\leq n$ and 
$\omega_1,\ldots,\omega_{n-p-q}$ be K{\"a}hler forms on $X$. 
Then the class $\{\omega_1\wedge\ldots\wedge \omega_{n-p-q}\}$  
satisfies the Hodge-Riemann theorem, the hard Lefschetz theorem and the Lefschetz decomposition theorem for the bidegree $(p,q)$. 
\end{corollary}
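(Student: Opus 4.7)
The plan is to apply Theorem \ref{th_main_1} directly to the closed smooth form
$$\Omega:=\omega_1\wedge\cdots\wedge\omega_{n-p-q}.$$
Since each $\omega_j$ is a closed smooth $(1,1)$-form, $\Omega$ is closed, smooth and of bidegree $(n-p-q,n-p-q)$, so two of the three hypotheses of Theorem \ref{th_main_1} are automatic. The only condition that remains to be verified is the pointwise one: at every $x\in X$, the value $\Omega_x$ must lie in the Hodge-Riemann cone inside $\bigwedge^{n-p-q}E\otimes\bigwedge^{n-p-q}\overline E$, where $E$ denotes the complex cotangent space of $X$ at $x$.

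This pointwise condition is purely linear-algebraic: on an $n$-dimensional complex vector space $E$, a wedge product of $n-p-q$ strictly positive Hermitian $(1,1)$-forms must belong to the Hodge-Riemann cone for that bidegree. This is exactly the statement that Proposition \ref{prop_timorin}, derived from Timorin's mixed Hodge-Riemann theorem \cite{Timorin}, is designed to provide. Plugging this pointwise information into Theorem \ref{th_main_1} immediately yields the Hodge-Riemann, hard Lefschetz and Lefschetz decomposition theorems for the class $\{\omega_1\wedge\cdots\wedge\omega_{n-p-q}\}$, which is the content of the corollary.

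The substance of the argument therefore lies in the two results we are allowed to invoke, and not in the deduction itself: Theorem \ref{th_main_1} carries the analytic load of reducing global cohomological statements to a pointwise positivity condition, while Proposition \ref{prop_timorin} supplies that positivity for products of K\"ahler forms. The main \emph{obstacle} of this proof is essentially a bookkeeping one, namely checking that the Hodge-Riemann cone as defined in Definition \ref{def_hr} is phrased so that Timorin's theorem applies verbatim to $(\omega_1)_x\wedge\cdots\wedge(\omega_{n-p-q})_x$; once this compatibility is recorded, the corollary follows in a single line from Theorem \ref{th_main_1}.
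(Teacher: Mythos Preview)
Your proposal is correct and matches the paper's own argument exactly: the corollary is deduced by applying Theorem~\ref{th_main_1} to $\Omega=\omega_1\wedge\cdots\wedge\omega_{n-p-q}$, with the pointwise Hodge--Riemann condition supplied by Proposition~\ref{prop_timorin} (Timorin's linear result). There is no additional content in the paper's proof beyond this.
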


The last result was obtained by the authors in \cite{DinhNguyen}, 
see also Cattani \cite{Cattani} 
for a proof using the  theory of variations of Hodge structures  and for related results. 
It solves a problem which has been considered in some important cases by 
Khovanskii \cite{Khovanskii1,Khovanskii2}, Teissier \cite{Teissier1,Teissier2}, Gromov \cite{Gromov} and Timorin \cite{Timorin}. The reader 
will find some applications 
of the above corollary in  Gromov \cite{Gromov}, Dinh-Sibony \cite{DinhSibony}
and Keum-Oguiso-Zhang \cite{KeumOguisoZhang, Zhang}.

\section{Hodge-Riemann forms} \label{section_hr_linear}

In this section, we introduce the notion of Hodge-Riemann form in the 
linear setting and we will discuss some basic properties of these forms.

Let $E$ be a complex vector space of dimension
$n$ and $\overline E$ its conjugate space.
Denote by $V^{p,q}$  
the space  $\bigwedge^p E\otimes \bigwedge^q\overline E$  of $(p,q)$-forms with the convention that $V^{p,q}:=0$ unless
$0\leq p,q\leq n$.
Recall that a form $\omega$ in $V^{1,1}$ is a  {\it K{\"a}hler form} if 
it can
be written as
$$\omega=idz_1\wedge d\overline z_1+\cdots
+idz_n\wedge d\overline z_n$$
for some coordinate system $(z_1,\ldots, z_n)$ of $E$,
where $z_i\otimes \overline z_j$ is identified with $dz_i\wedge d\overline z_j$.

Recall also that a form $\Omega$ in $V^{k,k}$ with $0\leq k\leq n$, 
is {\it real} if
$\Omega=\overline\Omega$. Let $V^{k,k}_\R$ denote the space 
of real $(k,k)$-forms. A form 
$\Omega$ in $V^{k,k}$ is {\it positive}\footnote{There are two other notions of positivity but we will not use here.} if it is a
combination with positive coefficients of forms of type
$i^{k^2}\alpha\wedge\overline\alpha$ with $\alpha\in V^{k,0}$. So,
positive forms are real. If
$\Omega$ is positive its restriction to any
subspace of $E$ is positive. A  positive $(k,k)$-form $\Omega$ is {\it strictly positive}, if its restriction to any subspace of dimension $k$ of $E$ does not vanish. 
The powers of a K{\"a}hler form are strictly positive forms. 
Fix a K{\"a}hler form $\omega$ as above.  

\begin{definition}\rm \label{def_hr}
A real $(k,k)$-form $\Omega$ in $V^{k,k}_\R$ is said  to be 
{\it a Hodge-Riemann form for the bidegree $(p,q)$} if 
$k=n-p-q$ and if  there is a continuous deformation $\Omega_t\in
V^{k,k}_\R$ with $0\leq t\leq 1$, $\Omega_0=\Omega$ and
$\Omega_1=\omega^k$ such that 

\medskip\noindent
${\bf (*)}\quad$ the map  $\alpha\mapsto \Omega_t\wedge\omega^{2r}\wedge \alpha$ is
an isomorphism from $V^{p-r,q-r}$  to $V^{n-q+r,n-p+r}$ 

\medskip\noindent
for every $0\leq r\leq\min\{p,q\}$ and $0\leq t\leq 1$. The cone of such forms $\Omega$
is called {\it the Hodge-Riemann cone for the
bidegree $(p,q)$}. We say that $\Omega$ is {\it Hodge-Riemann} if it is a Hodge-Riemann form for any bidegree $(p,q)$ with $p+q=n-k$. 
\end{definition}

Note that the property $(*)$  for $t=1$ 
is a consequence of the linear version of the 
classical hard Lefschetz theorem. The Hodge-Riemann cone is open in $V^{k,k}_\R$ and a priori depends on the choice of
$\omega$. In practice, to check that a form is Hodge-Riemann is usually not a simple matter. We have the following result due to Timorin in \cite{Timorin}.

\begin{proposition} \label{prop_timorin}
Let $k$ be an integer such that 
$0\leq k\leq n$.
Let $\omega_1,\ldots,\omega_k$ be K\"ahler forms. Then
$\Omega:=\omega_1\wedge\ldots\wedge\omega_k$  is a Hodge-Riemann form.  
\end{proposition}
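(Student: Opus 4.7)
The plan is to exhibit a natural one-parameter family of wedge products connecting $\Omega$ to $\omega^k$, and to reduce the property $(*)$ at each time $t$ to the mixed hard Lefschetz theorem for wedge products of K\"ahler forms, which is Timorin's main result in \cite{Timorin} and which I will use as a black box.

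For $t\in[0,1]$ and $1\leq i\leq k$ set $\omega_i(t):=t\omega+(1-t)\omega_i$, and define
$$\Omega_t:=\omega_1(t)\wedge\cdots\wedge\omega_k(t).$$
This family is continuous in $t$, with $\Omega_0=\Omega$ and $\Omega_1=\omega^k$. Since $\omega$ and each $\omega_i$ are K\"ahler and the K\"ahler cone in $V^{1,1}_\R$ is convex (a positive linear combination of K\"ahler forms is visibly K\"ahler in a common coordinate system, up to a linear change), each factor $\omega_i(t)$ is itself a K\"ahler form for every $t\in[0,1]$.

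Now fix non-negative integers $p,q$ with $p+q=n-k$ and an integer $r$ with $0\leq r\leq\min\{p,q\}$. Observe that
$$\Omega_t\wedge\omega^{2r}=\omega_1(t)\wedge\cdots\wedge\omega_k(t)\wedge\omega^{2r}$$
is a wedge product of $k+2r=n-(p-r)-(q-r)$ K\"ahler forms. Timorin's mixed hard Lefschetz theorem then asserts that $\alpha\mapsto\Omega_t\wedge\omega^{2r}\wedge\alpha$ is an isomorphism from $V^{p-r,q-r}$ to $V^{n-q+r,n-p+r}$. Since a single deformation $\Omega_t$ works simultaneously for every admissible $(p,q,r)$, this verifies the condition $(*)$ in Definition \ref{def_hr} for every bidegree $(p,q)$ with $p+q=n-k$, so $\Omega$ is Hodge-Riemann in the sense required.

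The real work is, of course, Timorin's mixed hard Lefschetz theorem itself, and that is where I would expect any genuine difficulty to lie; once it is in hand, the convex interpolation $\omega_i(t)=t\omega+(1-t)\omega_i$ automatically supplies a path within the class of wedge products of K\"ahler forms, and the Hodge-Riemann property follows with no further effort.
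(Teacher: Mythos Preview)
Your proposal is correct and is precisely the intended argument: the paper does not supply its own proof of Proposition~\ref{prop_timorin} but simply attributes the result to Timorin \cite{Timorin}, and what you have written is exactly the unpacking of that attribution---the convex interpolation $\omega_i(t)=t\omega+(1-t)\omega_i$ keeps each factor K\"ahler, so $\Omega_t\wedge\omega^{2r}$ is always a product of $n-(p-r)-(q-r)$ K\"ahler forms, and Timorin's mixed hard Lefschetz theorem gives $(*)$ for every $t$ and every admissible $(p,q,r)$. There is nothing to add.
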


Consider a square matrix $M=(\alpha_{ij})_{1\leq i,j\leq k}$ with entries in $V^{1,1}$.  Assume that $M$ is Hermitian, i.e. $\alpha_{ij}=\overline\alpha_{ji}$ for all $i,j$. We say that $M$ is {\it Griffiths positive} if for any row vector $\theta=(\theta_1,\ldots,\theta_k)$ in $\C^k\setminus \{0\}$ and its transposed
$\transposee \theta$, $\theta M\transposee{\overline \theta}$ is a K\"ahler form. We call
{\it Griffiths cone} the set of $(k,k)$-forms in $V^{k,k}$ which can be obtained as the determinant of a Griffiths positive 
matrix $M$ as above. We are still unable to answer the following question. 

\begin{problem} Is the Griffiths cone contained in the Hodge-Riemann cone ?
\end{problem}

The affirmative answer to the question will allow us to obtain a transcendental version of the hyperplane Lefschetz theorem which is known for the last Chern class associated with a Griffiths positive vector bundle, see Voisin \cite[p.312]{Voisin}.
The Griffiths cone contains the wedge-products of K\"ahler forms (case where $M$ is diagonal) and
Proposition \ref{prop_timorin} gives the affirmative answer to this case. 

Note also that for the above problem it is enough to check the condition $(*)$ for $t=0$ and $r=0$. Indeed, 
we can consider $\Omega_t$ the determinant of the Griffiths positive matrix $M_t:=(1-t)M+tI\omega$, where $I$ is the identity matrix. It is enough to observe that $\Omega_t\wedge\omega^{2r}$ is the determinant of the Griffiths positive $(k+2r)\times (k+2r)$ matrix which is obtained by adding to $M_t$ a square block equal to $\omega$ times the identity $2r\times 2r$ matrix.

The following question is also open.

\begin{problem} \label{prob_wedge}
Let $\Omega_t$, $0\leq t\leq 1$, be a continuous family of strictly positive $(k,k)$-forms in $V^{k,k}_\R$ with $\Omega_0=\Omega$ and $\Omega_1=\omega^k$. Assume the property $(*)$ in Definition \ref{def_hr} for $r=0$ and for this family $\Omega_t$. Is $\Omega$ always a Hodge-Riemann form for the bidegree $(p,q)$ ?
\end{problem}

Note that the strict positivity of $\Omega_t$ implies the property $(*)$ for $r=\min\{p,q\}$. This is perhaps a reason to believe that the answer to the above problem is affirmative. An interesting point here is that the cone of all forms $\Omega$ as in 
Problem \ref{prob_wedge} does not depend on $\omega$.
The following result gives a partial answer to the question.

\begin{proposition} \label{th_hr_prod}
Let $\Omega_t$ be as in Problem \ref{prob_wedge}. Assume moreover that $\min\{p,q\}\leq 2$. Then $\Omega$ is a Hodge-Riemann form for the bidegree $(p,q)$.
\end{proposition}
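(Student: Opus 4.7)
The plan is to show that the given family $\Omega_t$ itself satisfies $(*)$ at every $0\le r\le\min\{p,q\}$. The hypothesis provides $(*)$ at $r=0$, while the remark preceding the proposition gives $(*)$ at $r=\min\{p,q\}$ from strict positivity. So only $\min\{p,q\}=2$ is of interest, and after exchanging $p$ and $q$ if necessary I may take $p=2\le q$; only $r=1$ remains to be checked.

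Fix an identification $V^{n,n}\cong\C$ and, on $V^{p-r,q-r}$, introduce the Hermitian form
\[
B_t^{(r)}(\alpha,\beta):=i^{q-p}(-1)^{(p+q-2r)(p+q-2r+1)/2}\,\Omega_t\wedge\omega^{2r}\wedge\alpha\wedge\overline\beta,
\]
so that $(*)$ at $r$ is exactly non-degeneracy of $B_t^{(r)}$; a short computation yields the pullback identity $B_t^{(r+1)}(\alpha,\beta)=-B_t^{(r)}(\omega\wedge\alpha,\omega\wedge\beta)$. Applying $(*)$ at $r=2$ (available by strict positivity), the map $\Omega_t\wedge\omega^3\wedge\cdot:V^{p-1,q-1}\to V^{n-q+2,n-p+2}$ is surjective, since it factors the isomorphism $\Omega_t\wedge\omega^4\wedge\cdot$ on $V^{p-2,q-2}$ through $\omega\wedge\cdot:V^{p-2,q-2}\to V^{p-1,q-1}$. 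Writing $\tilde U_t$ for its kernel, one has $\dim\tilde U_t=\dim V^{p-1,q-1}_\prim$. Using $(*)$ at $r=2$ once more gives the direct sum
\[
V^{p-1,q-1}=\tilde U_t\oplus\omega\wedge V^{p-2,q-2},
\]
and this decomposition is $B_t^{(1)}$-orthogonal: for $\alpha\in\tilde U_t$ and $\gamma\in V^{p-2,q-2}$ the cross term is proportional to $\Omega_t\wedge\omega^3\wedge\alpha\wedge\overline\gamma$, which vanishes by the definition of $\tilde U_t$. Via the pullback identity, $B_t^{(1)}$ restricted to $\omega\wedge V^{p-2,q-2}$ is, up to a fixed sign, equal to $B_t^{(2)}$; this is non-degenerate by $(*)$ at $r=2$ and positive-definite throughout by constancy of signature from $t=1$ (at which point $V^{p-2,q-2}=V^{0,q-2}$ is automatically primitive and the classical Hodge-Riemann theorem applies). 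Thus $(*)$ at $r=1$ is equivalent to non-degeneracy of $B_t^{(1)}$ on $\tilde U_t$.

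The main obstacle is this remaining reduction. At $t=1$ we have $\tilde U_1=V^{p-1,q-1}_\prim$, on which $B_1^{(1)}$ is positive-definite by Hodge-Riemann, so by continuity $B_t^{(1)}|_{\tilde U_t}$ is positive-definite---in particular non-degenerate---for $t$ near $1$. To propagate this across all of $[0,1]$ I would combine the two rigid pieces of signature data at hand---the constant signature of $B_t^{(0)}$ on $V^{p,q}$ (from $(*)$ at $r=0$) and the positive-definiteness of $B_t^{(2)}$ throughout---with the structural constraint imposed by $\min\{p,q\}=2$, which limits the Lefschetz tower over $V^{p,q}$ to just three levels. In this set-up the $B_t^{(0)}$-orthogonal sub-decomposition $\omega\wedge V^{p-1,q-1}=\omega\wedge\tilde U_t\oplus\omega^2\wedge V^{p-2,q-2}$ has only two summands inside $V^{p,q}$, and I expect that any zero-eigenvalue crossing of $B_t^{(1)}|_{\tilde U_t}$ would force an inconsistency with the fixed signatures on the flanking subspaces. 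Making this signature book-keeping rigorous---where the hypothesis $\min\{p,q\}\le 2$ enters decisively---is the delicate heart of the proof.
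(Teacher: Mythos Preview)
Your reduction is correct: only the case $\min\{p,q\}=2$ and $r=1$ requires work, and your decomposition $V^{p-1,q-1}=\tilde U_t\oplus\omega\wedge V^{p-2,q-2}$ together with the definiteness of $B_t^{(2)}$ is a clean way to isolate the problem as non-degeneracy of $B_t^{(1)}$ on $\tilde U_t$. However, the proof is genuinely incomplete, and the ``signature book-keeping'' you sketch does not close the gap. The point is that although $B_t^{(0)}$ has constant signature on the full space $V^{p,q}$, its restriction to the \emph{fixed} subspace $\omega\wedge V^{p-1,q-1}$ has no reason to keep constant signature as $t$ varies: the $B_t^{(0)}$-orthogonal complement of this subspace moves with $t$, and at the critical $t_0$ the would-be complementary primitive subspace $P^{p,q}_{t_0}=\ker(\Omega_{t_0}\wedge\omega\wedge\cdot)$ may jump in dimension (indeed $\omega\wedge\alpha_0$ lands in it for any null vector $\alpha_0\in\tilde U_{t_0}$). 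So one cannot simply read off a contradiction from the three-level Lefschetz tower; a null vector in $\tilde U_{t_0}$ is perfectly compatible with all the signature constraints you have written down.

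The paper takes a different route. At the first $t_0$ where $(*)$ at $r=1$ fails, it does not count signatures but instead proves a structural fact (Lemma~\ref{lemma_prim_bis}): any $\alpha\in V^{p,q-1}$ with $\alpha\wedge\Omega_{t_0}\wedge\omega=0$ already lies in $\omega\wedge V^{p-1,q-2}$. The mechanism is a Cauchy--Schwarz trick: one knows $Q$ is positive \emph{semi}-definite on the primitive space at $t_0$ (as a limit from $t>t_0$), and for each coordinate one-form $d\overline z_j$ the element $\alpha\wedge d\overline z_j$ is primitive with $\sum_j Q(\alpha\wedge d\overline z_j,\alpha\wedge d\overline z_j)=0$; hence each term vanishes and Cauchy--Schwarz forces $\alpha\wedge d\overline z_j$ to be $Q$-orthogonal to the whole primitive space, hence to all of $V^{p,q}$. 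A $\U(n)$-representation argument (Lemma~\ref{lemma_prim}) then converts this into $\alpha\in\omega\wedge V^{p-1,q-2}$. Finally strict positivity of $\Omega_{t_0}$ kills the resulting $(p-1,0)$-component. This Cauchy--Schwarz step is the missing idea in your outline; without it, semi-positivity at the boundary does not by itself prevent degeneration.
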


Fix a coordinate system $(z_1,\ldots,z_n)$ of $E$ such that 
$\omega=idz_1\wedge d\overline z_1+\cdots+idz_n\wedge d\overline z_n$. So, this K\"ahler form is invariant under the natural action of the unitary group $\U(n)$. 
We will need the following lemma.

\begin{lemma} \label{lemma_prim}
Let $\alpha$ be a form in $V^{p,q-1}$ with $q\geq 2$ and
  $p+q\leq n$. Assume that for every
  $\varphi\in V^{0,1}$ we can write $\alpha\wedge\varphi=\omega\wedge\beta$
  for some $\beta\in V^{p-1,q-1}$. Then we can write
  $\alpha=\omega\wedge\gamma$ for some $\gamma\in V^{p-1,q-2}$.  
\end{lemma}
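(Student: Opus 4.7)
\emph{The plan} is: reduce to primitive $\alpha$ via Lefschetz decomposition, pin down $\beta_\varphi$ using a K\"ahler identity, and then conclude $\alpha=0$ by a trace argument, handling an exceptional parameter locus by antisymmetrization. Using the Lefschetz decomposition $V^{p,q-1}=V^{p,q-1}_{\prim}\oplus\omega\wedge V^{p-1,q-2}$, write $\alpha=\alpha_0+\omega\wedge\alpha_1$ with $\alpha_0\in V^{p,q-1}_{\prim}$. Since $\omega\wedge\alpha_1\wedge\varphi\in\omega\wedge V^{p-1,q-1}$ automatically, the hypothesis forces $\alpha_0\wedge\varphi\in\omega\wedge V^{p-1,q-1}$ for every $\varphi$; as the conclusion $\alpha\in\omega\wedge V^{p-1,q-2}$ is equivalent to $\alpha_0=0$, I may assume $\alpha$ is primitive and aim to show $\alpha=0$.

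Fix coordinates so $\omega=i\sum_j dz_j\wedge d\bar z_j$ and write $\iota_k=\iota_{\partial/\partial z_k}$. A Leibniz computation with $\Lambda=-i\sum_j\bar\iota_j\iota_j$ gives $\Lambda(\alpha\wedge d\bar z_k)=-i(-1)^{p+q}\iota_k\alpha$ for primitive $\alpha$, and $\iota_k\alpha$ is itself primitive since $[\Lambda,\iota_k]=0$. Writing $\alpha\wedge d\bar z_k=\omega\wedge\beta_k$ uniquely (since $L$ is injective in this bidegree) and decomposing $\beta_k=\sum_{j\geq 0}L^j\beta_{k,j}$ with $\beta_{k,j}\in V^{p-1-j,q-1-j}_{\prim}$, matching primitive parts in the relation $\Lambda(\omega\wedge\beta_k)=\Lambda(\alpha\wedge d\bar z_k)$ via the $\mathfrak{sl}_2$-scalars $(j+1)(p+q-n-2-3j)$ (all nonzero for $p+q\leq n$ and $j\geq 1$) forces $\beta_{k,j}=0$ for $j\geq 1$ and pins down $\beta_k=c\,\iota_k\alpha$ with $c=-i(-1)^{p+q}/(p+q-n-2)$.

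Substituting back, the hypothesis becomes $\alpha\wedge d\bar z_k=c\,\omega\wedge\iota_k\alpha$ for every $k$. Wedging by $dz_k$ and summing, using $\sum_k dz_k\wedge d\bar z_k=-i\omega$ and the number-operator identity $\sum_k dz_k\wedge\iota_k\alpha=p\alpha$, produces a scalar identity $\lambda(n,p,q)\,\omega\wedge\alpha=0$. Since $L:V^{p,q-1}\to V^{p+1,q}$ is injective for $p+q\leq n$, this forces $\alpha=0$ whenever $\lambda\neq 0$, and a direct check identifies the exceptional locus as $n=2p+q-2$. In that case, wedge the identity instead by $d\bar z_l$ and antisymmetrize in $(k,l)$ to obtain $d\bar z_l\wedge\iota_k\alpha+d\bar z_k\wedge\iota_l\alpha=0$ for all $k,l$; setting $l=k$ yields $d\bar z_k\wedge\iota_k\alpha=0$, which in the monomial basis forces $a_{I,J}=0$ unless $I\subseteq J$. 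Combining this with the residual pairwise symmetries and the primitivity relation $\omega^{n-p-q+2}\wedge\alpha=0$ then yields $\alpha=0$.

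The principal obstacle is the final step inside the exceptional locus $n=2p+q-2$: the trace argument cleanly dispatches everything outside it, but on that locus one must simultaneously use the antisymmetric relation on the interior products and the primitivity equation to rule out the remaining ``diagonal-supported'' configurations $a_{I,J}$ with $I\subseteq J$. Note that the naive conjugate argument applied to $\bar\alpha$ produces the same exceptional locus and hence does not help independently.
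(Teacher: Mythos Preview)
Your strategy is sound and genuinely different from the paper's proof, but there is a sign error that creates a phantom exceptional case whose treatment is then left incomplete.

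\textbf{The sign error.} With $\Lambda=-i\sum_j\bar\iota_j\iota_j$ and the standard $\mathfrak{sl}_2$-relation, one has $\Lambda L^{j+1}\beta_{k,j}=(j+1)(n-p-q+2+j)\,L^j\beta_{k,j}$ for $\beta_{k,j}$ primitive of bidegree $(p-1-j,q-1-j)$, not the formula $(j+1)(p+q-n-2-3j)$ you quote. Consequently the correct constant is $c=\dfrac{-i(-1)^{p+q}}{n-p-q+2}$, with a \emph{positive} denominator since $p+q\le n$. Your trace computation then yields
\[
\Big[i(-1)^{p+q}-cp\Big]\,\omega\wedge\alpha
=i(-1)^{p+q}\,\frac{n-q+2}{\,n-p-q+2\,}\;\omega\wedge\alpha=0,
\]
and $n-q+2\ge 2$, so the coefficient never vanishes. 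There is no exceptional locus $n=2p+q-2$; the argument terminates immediately with $\alpha=0$.

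\textbf{The gap.} As written, your handling of the (spurious) exceptional case stops at ``$a_{I,J}=0$ unless $I\subseteq J$'' followed by an unspecified appeal to ``residual pairwise symmetries and the primitivity relation''. That sentence is not a proof. The issue disappears once the sign is corrected, but the proposal in its current form is incomplete.

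\textbf{Comparison with the paper.} The paper observes that the set $M\subseteq V^{p,q-1}$ of forms satisfying the hypothesis is $\U(n)$-invariant; since the Lefschetz pieces $\omega^j\wedge P_j$ are pairwise non-isomorphic irreducible $\U(n)$-modules, $M$ is a sum of some of them, and a single explicit monomial primitive form not in $M$ rules out $P_0$. Your route avoids representation theory entirely, replacing it with K\"ahler identities and the $\mathfrak{sl}_2$-action to \emph{identify} $\beta_k=c\,\iota_k\alpha$ explicitly, then a trace. The paper's argument is shorter and needs only one counterexample; your argument is constructive and yields the exact obstruction formula, which may be useful elsewhere.
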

\proof
Let $M$  denote  the set  of all  forms $\alpha\in V^{p,q-1}$  satisfying the hypothesis of the lemma.
Observe that $M$  is invariant under the
action of  $\U(n)$. So, it is a linear representation
of this group. Let $P_j$ denote the primitive subspace of
$V^{p-j,q-1-j}$, i.e. the set of $\phi\in
V^{p-j,q-1-j}$ such that $\phi\wedge \omega^{n-p-q+2+2j}=0$. It is
well-known that the $P_j$ are irreductible representations of $\U(n)$
and they are not isomorphic one another, see
e.g. Fujiki \cite[Prop. 2.2]{Fujiki}. 
Moreover, we have the Lefschetz decomposition
$$V^{p,q-1}=\bigoplus_{0\leq j\leq \min\{p,q-1\}} \omega^j\wedge P_j.$$
The space $\omega^j\wedge P_j$ is also a representation of $\U(n)$
which is isomorphic to $P_j$.
Therefore, it is enough to show that $M$ does not contain $P_0$. 

Consider the form
$$\alpha:=d\overline z_2\wedge\ldots \wedge d\overline z_q\wedge d z_{q+1}\wedge \ldots \wedge d z_{p+q}.$$
A direct computation shows that $\alpha$ is a form in $P_0$.
Observe that $\alpha\wedge d\overline z_1$ does not contain any factor $dz_j\wedge d\overline z_j$. Therefore, $\alpha\not\in M$ because $\alpha\wedge d\overline z_1$ does not belong to $\omega\wedge V^{p-1,q-1}$. 
The lemma follows.
\endproof

Given non-negative integers $p,q$ such that 
$p+q\leq n$  and  a real form $\Omega$ of bidegree $(n-p-q,n-p-q)$, define the Hermitian form $Q$ by
\begin{equation*} 
Q(\alpha,\beta):= i^{q-p}(-1)^{\frac{(p+q)(p+q+1)}{2}}\ast
\Big(\alpha\wedge \overline{\beta}\wedge  \Omega \Big)\quad \mbox{for}
\quad \alpha,\beta\in V^{p,q},
\end{equation*} 
where  $\ast$ is the Hodge star operator.
Define also {\it the primitive subspace}
$$P^{p,q}:= \big\{\alpha\in  V^{p,q}:\  \alpha\wedge\Omega\wedge \omega =0 \big\}.$$

The classical Lefschetz theorem asserts that 
the wedge-product with $\omega$ defines
a surjective map from $V^{n-q,n-p}$ to $V^{n-q+1,n-p+1}$. Its kernel
is of dimension $\dim V^{p,q}-\dim V^{p-1,q-1}$. Therefore, 
if the map $\alpha\mapsto \Omega\wedge\alpha$ is injective on $V^{p,q}$,
the above primitive space 
has dimension $\dim V^{p,q}-\dim V^{p-1,q-1}$ which does not depend on 
$\Omega$.

We also need the following lemma.

\begin{lemma} \label{lemma_prim_bis}
Let  $\Omega_t$ be  a continuous family of real $(k,k)$-forms in $V^{k,k}_\R$ with 
$\Omega_0=\Omega$, $\Omega_1=\omega^k$ and $0\leq t\leq 1$.
Assume that  $\alpha\mapsto \Omega_t\wedge \alpha$ is
an isomorphism from $V^{p,q}$  to $V^{n-q,n-p}$ 
for every  $0\leq t\leq 1$ and 
$\alpha\mapsto \Omega_t\wedge\omega^{2}\wedge \alpha$ is
an isomorphism from $V^{p-1,q-1}$  to $V^{n-q+1,n-p+1}$ 
for every $0 < t \leq 1$.
If  a form $\alpha$ in $V^{p,q-1}$ (resp. $V^{p-1,q}$) satisfies $\alpha\wedge\Omega\wedge\omega=0$,  then $\alpha$ belongs to $\omega\wedge V^{p-1,q-2}$ (resp. $\omega\wedge V^{p-2,q-1}$). 
\end{lemma}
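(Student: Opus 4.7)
My plan is to reduce the statement to Lemma~\ref{lemma_prim} and exploit the direct-sum decomposition provided by the iso hypotheses for $t>0$, then pass to the limit $t\to 0^+$. Focus on the case $\alpha\in V^{p,q-1}$; the case $V^{p-1,q}$ will follow by complex conjugation. First I will apply Lemma~\ref{lemma_prim}: the conclusion $\alpha\in\omega\wedge V^{p-1,q-2}$ reduces to showing that for every $\varphi\in V^{0,1}$, the form $\alpha\wedge\varphi$ lies in $\omega\wedge V^{p-1,q-1}$. So I fix $\varphi$ and set $\eta:=\alpha\wedge\varphi\in V^{p,q}$; since $\Omega$ and $\omega$ have even total degree they commute with the odd form $\varphi$, so the hypothesis $\alpha\wedge\Omega\wedge\omega=0$ gives $\eta\wedge\Omega\wedge\omega=0$, placing $\eta$ in the $\Omega$-primitive subspace $P^{p,q}_\Omega:=\{\eta'\in V^{p,q}:\eta'\wedge\Omega\wedge\omega=0\}$.

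For each $t\in(0,1]$ the iso assumptions yield the direct-sum decomposition $V^{p,q}=P^{p,q}_{\Omega_t}\oplus(\omega\wedge V^{p-1,q-1})$: dimensions match because $\Omega_t\wedge\cdot:V^{p,q}\to V^{n-q,n-p}$ is an iso and the classical Lefschetz map $V^{n-q,n-p}\to V^{n-q+1,n-p+1}$ is surjective, giving $\dim P^{p,q}_{\Omega_t}=\dim V^{p,q}-\dim V^{p-1,q-1}$, while $\omega\wedge\cdot$ is injective on $V^{p-1,q-1}$ (as $p+q\le n$); the intersection is trivial because $A_t:=\omega^2\wedge\Omega_t\wedge\cdot:V^{p-1,q-1}\to V^{n-q+1,n-p+1}$ is an iso. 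Writing $\eta=\eta^P_t+\omega\wedge\beta_t$ in this decomposition and applying $\cdot\wedge\Omega_t\wedge\omega$ gives $A_t\beta_t=\eta\wedge\Omega_t\wedge\omega$, and the $\Omega$-primitivity of $\eta$ rewrites this as
\[
  A_t\beta_t=\eta\wedge(\Omega_t-\Omega)\wedge\omega,
\]
whose right-hand side tends to $0$ as $t\to 0^+$.

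The final step is to pass to the limit. Aiming for a (subsequential) limit $\beta_0$ of $\beta_t$, I will then have $\eta^P_t\to\eta-\omega\wedge\beta_0=:\eta^P_0$, and passing to the limit in $\eta^P_t\wedge\Omega_t\wedge\omega=0$ gives $\eta^P_0\in P^{p,q}_\Omega$. Provided in addition that $\eta^P_0\in\omega\wedge V^{p-1,q-1}$, it will lie in $P^{p,q}_\Omega\cap\omega\wedge V^{p-1,q-1}=\omega\wedge\ker A_0$; absorbing it into the Lefschetz part yields $\eta\in\omega\wedge V^{p-1,q-1}$, finishing the argument via Lemma~\ref{lemma_prim}. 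The hard part will be precisely this limiting step: when $A_0$ is singular, $A_t^{-1}$ blows up along $\ker A_0$ exactly while the right-hand side decays simultaneously, and controlling this cancellation so as to obtain both a bounded $\beta_t$ and a limit $\eta^P_0$ inside $\omega\wedge V^{p-1,q-1}$---using the continuity of the family $\Omega_t$ together with the $t=0$ iso hypothesis on $V^{p,q}$---is the delicate technical point.
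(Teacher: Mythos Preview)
Your reduction via Lemma~\ref{lemma_prim} is exactly right, and the paper does the same. The gap is in the limiting step, which you yourself flag as ``the delicate technical point'' but do not actually resolve. More seriously, the strategy as written cannot succeed: once you set $\eta:=\alpha\wedge\varphi$, the only property of $\eta$ your limiting argument uses is that $\eta\in P^{p,q}_\Omega$. But the conclusion you want, $\eta\in\omega\wedge V^{p-1,q-1}$, is \emph{not} a consequence of $\eta\in P^{p,q}_\Omega$ alone. Indeed, when $A_0$ is singular the spaces $P^{p,q}_\Omega$ and $\omega\wedge V^{p-1,q-1}$ intersect nontrivially but still do not span $V^{p,q}$, so a generic element of $P^{p,q}_\Omega$ lies outside $\omega\wedge V^{p-1,q-1}$. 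Your limit argument treats $\eta$ as such a generic element; no amount of control on $A_t^{-1}$ versus $\Omega_t-\Omega$ will bridge this, because the statement you are trying to deduce is simply false at that level of generality. There is also no reason offered for the claim ``$\eta^P_0\in\omega\wedge V^{p-1,q-1}$''; as the limit of elements $\eta^P_t\in P^{p,q}_{\Omega_t}$, it naturally lands in $P^{p,q}_\Omega$, not in the Lefschetz part.

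The paper supplies the missing idea by using the \emph{specific} structure $\eta=\alpha\wedge\varphi$ together with the Hermitian form $Q$. The deformation hypothesis is used not to take a limit of decompositions, but to show (as in Proposition~\ref{prop_hr_linear}) that $Q$ is positive semi-definite on $P^{p,q}_\Omega$. One then takes $\varphi=d\overline z_j$, observes that $\alpha\wedge d\overline z_j\in P^{p,q}_\Omega$, and notes that $\sum_j Q(\alpha\wedge d\overline z_j,\alpha\wedge d\overline z_j)$ equals a constant times $\ast(\alpha\wedge\overline\alpha\wedge\Omega\wedge\omega)=0$. Semi-positivity forces each term to vanish, and Cauchy--Schwarz then gives $Q(\alpha\wedge d\overline z_j,\phi)=0$ for all $\phi\in P^{p,q}_\Omega$. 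Combined with the trivial orthogonality to $\omega\wedge V^{p-1,q-1}$, this places $\alpha\wedge d\overline z_j$ in the $Q$-orthogonal complement of $P^{p,q}_\Omega+\omega\wedge V^{p-1,q-1}$, which a dimension count identifies with $P^{p,q}_\Omega\cap\omega\wedge V^{p-1,q-1}$. That is the step your argument is missing: you need the semidefinite form and the sum-over-$j$ trick, not a limit of projections.
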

\proof
Let $V$ denote the space of forms $\beta\in V^{p,q}$ such that
$Q(\beta,\phi)=0$ for every $\phi$ in $\omega\wedge
V^{p-1,q-1}+P^{p,q}$. The hypothesis implies that $Q$ is non-degenerate. Therefore, we obtain
$$\dim \omega\wedge V^{p-1,q-1}+\dim P^{p,q}=\dim V^{p-1,q-1}+\dim V^{p,q}-\dim V^{p-1,q-1}=\dim V^{p,q},$$
and hence
$$\dim V= \dim V^{p,q}-\dim (\omega\wedge
V^{p-1,q-1}+P^{p,q})= \dim (\omega\wedge
V^{p-1,q-1}\cap P^{p,q}).$$
On the other hand, by definition of $P^{p,q}$, the space 
$\omega\wedge V^{p-1,q-1}\cap P^{p,q}$ is contained in
$V$. We deduce that these two spaces coincide.

Let $\alpha\in V^{p,q-1}$ such that $\alpha\wedge\Omega\wedge\omega=0$ (the case $\alpha\in V^{p-1,q}$ can be treated in the same way).
Fix a form $\varphi$ in $V^{0,1}$. By Lemma \ref{lemma_prim}, we only need to show that
$\alpha\wedge\varphi$ belongs to $V$. 
It is clear that $Q(\alpha\wedge\varphi,\phi)=0$ for $\phi\in
\omega\wedge V^{p-1,q-1}$. It remains to show that
$Q(\alpha\wedge\varphi,\phi)=0$ for $\phi\in P^{p,q}$. For
this purpose, it is enough to consider the case where $\varphi=d\overline z_j$
since $\{d\overline z_1,\ldots , d\overline z_n\}$ is a basis of $V^{0,1}$.

Using the continuous deformation of $\Omega$ in the hypothesis, we obtain as in Proposition \ref{prop_hr_linear} below that  the restriction of $Q$ to
$P^{p,q}$ is semi-positive. Observe that $\alpha\wedge
d\overline z_j$ is in $P^{p,q}$. Hence,
$$Q(\alpha\wedge d\overline z_j,\alpha\wedge d\overline z_j)\geq 0.$$
The sum over $j$ of $Q(\alpha\wedge d\overline z_j,\alpha\wedge d\overline z_j)$ vanishes
since $\alpha\wedge\Omega\wedge\omega=0$. We deduce that all the above
inequalities are in fact equalities. Now, since $Q$ is semi-positive
on $P^{p,q}$, by Cauchy-Schwarz's inequality, 
$Q(\alpha\wedge d\overline z_j,\phi)=0$ for $\phi\in P^{p,q}$. This
completes the proof.
\endproof 
  
\noindent{\bf Proof of Proposition   \ref{th_hr_prod}.}
Assume without loss of generality that
$q\leq p$.  
Observe that for every $\alpha$ non-zero in $V^{n-k-s,0}$ we have 
$i^{(n-k-s)^2}\alpha\wedge\overline \alpha\wedge\Omega_t\wedge\omega^s>0$. 
So, we only have to consider the case $q=2$ and to check the property $(*)$ for $r=1$. 
We will show that the map $\alpha\mapsto \Omega_t\wedge\omega\wedge\alpha$ is injective on $V^{p,1}$ and 
the map $\alpha\mapsto \Omega_t\wedge\omega^2\wedge\alpha$ is injective on $V^{p-1,1}$. The result will follow easily.

Let $\Sigma$ denote the set of $t$ satisfying the above property. By continuity, $\Sigma$ is open in $[0,1]$. Moreover, by Lefschetz theorem, it contains the point $1$. Assume that $\Sigma$ is not equal to $[0,1]$. Let $t_0<1$ be the minimal number such that $]t_0,1]\subset \Sigma$. We will show that $t_0\in\Sigma$ which is a contradiction. Up to a re-parametrization of the family $\Omega_t$, we can assume for simplicity that $t_0=0$. 

Consider a form $\alpha\in V^{p,1}$ such that $\Omega\wedge\omega\wedge\alpha=0$. We deduce from Lemma \ref{lemma_prim_bis} that $\alpha=\omega\wedge\gamma$ with $\gamma\in V^{p-1,0}$. We have $\gamma\wedge\overline\gamma\wedge\Omega\wedge\omega^2=0$. The positivity of $\Omega$ implies that $\gamma=0$ and then $\alpha=0$. 
So, the map $\alpha\mapsto\Omega\wedge\omega\wedge\alpha$ is injective on $V^{p,1}$.
By dimension reason, this map is bijective from $V^{p,1}$ to $V^{n-1,n-p}$. 
Now, we apply again Lemma \ref{lemma_prim_bis} but to $\Omega_t\wedge\omega$ instead of $\Omega_t$ and $(p,1)$ instead of $(p,q)$. We obtain as above that the map $\alpha\mapsto \Omega\wedge\omega^2\wedge\alpha$ is injective on $V^{p-1,1}$. Therefore, $0$ is a point in $\Sigma$. This completes the proof. \hfill $\square$

\bigskip

We give now fundamental properties of Hodge-Riemann forms that we will use in the next section.
We fix a norm on each space $V^{*,*}$. 
  
\begin{proposition}\label{prop_hr_linear}  
Let $\Omega$ be a form satisfying the condition $(*)$ in Definition \ref{def_hr} for $r=0,1$.
Then   
the space   $V^{p,q}$ 
splits into the $Q$-orthogonal direct sum
$$V^{p,q}=P^{p,q}\oplus \omega\wedge 
V^{p-1,q-1}$$
and the Hermitian form $Q$ is positive-definite on  
$P^{p,q}$. Moreover, for any constant $c_1>0$
large enough, there is a constant $c_2>0$ such that 
$$\|\alpha\|^2 \leq c_1 Q(\alpha,\alpha) +
c_2\|\alpha\wedge\Omega\wedge \omega\|^2\quad \mbox{for} \quad
\alpha\in V^{p,q}.$$
\end{proposition}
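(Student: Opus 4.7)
The plan is a three-step argument: first establish the orthogonal splitting from the hypothesis $(*)$ with $r=0,1$; then upgrade this to positive-definiteness of $Q$ on $P^{p,q}$ by a continuity-of-signature argument along the deformation $\Omega_t$; finally derive the norm inequality by decomposing $\alpha$ along the splitting.

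For the decomposition, the isomorphism in $(*)$ for $r=0$, combined with the surjectivity of $\gamma \mapsto \omega\wedge\gamma$ from $V^{n-q,n-p}$ onto $V^{n-q+1,n-p+1}$ (classical linear Lefschetz), already gives $\dim P^{p,q}=\dim V^{p,q}-\dim V^{p-1,q-1}$, as recorded in the excerpt. If $\alpha=\omega\wedge\beta$ lies in $P^{p,q}$, then $\Omega\wedge\omega^2\wedge\beta=0$, so $(*)$ for $r=1$ forces $\beta=0$; since $\omega\wedge\cdot$ is injective on $V^{p-1,q-1}$, a dimension count closes the direct sum. Orthogonality is immediate, because $Q(\alpha,\omega\wedge\beta)$ is a scalar multiple of $\ast(\alpha\wedge\Omega\wedge\omega\wedge\overline\beta)$, which vanishes as soon as $\alpha\in P^{p,q}$.

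The second step is the heart of the proof. Apply the first step to every $\Omega_t$ in the deformation, producing a $Q_t$-orthogonal splitting $V^{p,q}=P_t^{p,q}\oplus\omega\wedge V^{p-1,q-1}$ whose summands have $t$-independent dimensions. By $(*)$ at $r=0$, the map $\alpha\mapsto\Omega_t\wedge\alpha$ is an isomorphism, so the Hermitian form $Q_t$ is non-degenerate on $V^{p,q}$ and hence on each $Q_t$-orthogonal summand. Since $\cdot\wedge\Omega_t\wedge\omega$ has constant rank on $V^{p,q}$, the kernels $P_t^{p,q}$ vary continuously in $t$; pick a continuous family of linear isomorphisms $\phi_t:P_1^{p,q}\to P_t^{p,q}$. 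The pulled-back non-degenerate Hermitian forms $(\xi,\eta)\mapsto Q_t(\phi_t\xi,\phi_t\eta)$ on $P_1^{p,q}$ have locally constant signature; at $t=1$ the classical linear Hodge-Riemann theorem gives positive-definiteness, which then persists to $t=0$. This continuity argument is the main obstacle: it requires non-degeneracy of $Q_t$ on $P_t^{p,q}$ for \emph{every} $t\in[0,1]$, and this is exactly what $(*)$ for $r=0,1$ along the whole path provides.

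For the norm estimate, decompose $\alpha=\alpha'+\omega\wedge\beta$ with $\alpha'\in P^{p,q}$, $\beta\in V^{p-1,q-1}$. Since $\alpha'\wedge\Omega\wedge\omega=0$, one has $\alpha\wedge\Omega\wedge\omega=\Omega\wedge\omega^2\wedge\beta$, and the isomorphism in $(*)$ at $r=1$ yields $\|\omega\wedge\beta\|\le C\|\alpha\wedge\Omega\wedge\omega\|$. By Step 2, $\|\alpha'\|^2\le C'Q(\alpha',\alpha')$; since $Q$ is a Hermitian form on a finite-dimensional space, $|Q(\omega\wedge\beta,\omega\wedge\beta)|\le C''\|\omega\wedge\beta\|^2$, and $Q$-orthogonality gives $Q(\alpha',\alpha')=Q(\alpha,\alpha)-Q(\omega\wedge\beta,\omega\wedge\beta)\le Q(\alpha,\alpha)+C''\|\omega\wedge\beta\|^2$. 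Combining with $\|\alpha\|^2\le 2\|\alpha'\|^2+2\|\omega\wedge\beta\|^2$ and tracking constants, one checks that for any $c_1\ge 2C'$ the required inequality holds with some $c_2$ depending linearly on $c_1$.
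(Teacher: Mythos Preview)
Your proof is correct and follows essentially the same route as the paper's. The decomposition step, the continuity argument for positive-definiteness (the paper phrases it as a maximal-$t$ contradiction rather than invoking invariance of signature, but the content is identical: non-degeneracy of $Q_t$ on $P_t^{p,q}$ for all $t$ is the crucial input), and the norm estimate via the splitting all match the paper's argument with only cosmetic differences in bookkeeping.
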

\proof   
The $Q$-orthogonality is obvious. 
By classical Lefschetz theorem, the wedge-product with $\omega$ defines
an injective map from $V^{p-1,q-1}$ to $V^{p,q}$. Therefore, we have 
$$\dim V^{p,q}=\dim P^{p,q}+ \dim V^{p-1,q-1}
= \dim P^{p,q}+ \dim \omega\wedge V^{p-1,q-1}.$$
On the other hand, the property $(*)$ for $r=1$ implies that 
the intersection of $P^{p,q}$
and $\omega\wedge V^{p-1,q-1}$ is reduced to 0. 
We then deduce the above decomposition of $V^{p,q}$.
Of course, this property still holds if we replace $\Omega$ with
$\Omega_t$. 

Denote by $Q_t$ and $P^{p,q}_t$ the Hermitian form and the
primitive space associated with
$\Omega_t$ which are defined as above. Since the dimension of
$P^{p,q}_t$ is constant, this space depends continuously on $t$.
By classical Hodge-Riemann theorem, $Q_1$ is positive-definite on
$P^{p,q}_1$. If $Q$ is not positive-definite on $P^{p,q}$, there is a
maximal number $t$ such that $Q_t$ is not positive-definite. The
maximality of $t$ implies that $Q_s$ is positive-definite on $P^{p,q}_s$
when $s>t$. It follows by continuity that there is an element $\alpha\in P_t^{p,q}$, $\alpha\not=0$, such that $Q_t(\alpha,\beta)=0$ for $\beta\in P_t^{p,q}$. By definition of $P^{p,q}_t$, this identity holds also for $\beta\in \omega\wedge V^{p-1,q-1}$. We then deduce that the identity holds for all $\beta\in V^{p,q}$. It follows that $\alpha\wedge\Omega_t=0$. This is a contradiction. So, $Q$ is
positive-definite on $P^{p,q}$. 

We prove now the last assertion in the proposition for a fixed 
constant $c_1$ large enough. Consider a form $\alpha\in V^{p,q}$. The first
assertion implies that we can write
$$\alpha=\beta +  \omega\wedge\gamma \quad \mbox{with}\quad 
\beta\in P^{p,q}\quad \mbox{and}\quad \gamma\in V^{p-1,q-1}$$
and we have
$$Q(\alpha,\alpha)=Q(\beta,\beta)+Q(\omega\wedge\gamma,
\omega\wedge\gamma).$$
Since the wedge-product with $\Omega\wedge\omega^2$ defines an
isomorphism between $V^{p-1,q-1}$ and $V^{n-q+1,n-p+1}$, there is a
constant $c>0$ such that
$$c^{-1}\|\gamma\wedge\Omega\wedge\omega^2\|\leq \|\gamma\| \leq c\|\gamma\wedge\Omega\wedge \omega^2\| = c\|\alpha\wedge\Omega\wedge \omega\|.$$
Therefore, there is a constant $c'>0$ such that 
$$\|\alpha\|^2\leq c'(\|\beta\|^2+\|\gamma\|^2)\leq 
c'\|\beta\|^2+c'c^2\|\alpha\wedge\Omega\wedge \omega\|^2.$$  

Finally, since $Q$ is positive-definite on $P^{p,q}$ and since $c_1>0$ is
large enough, we obtain
\begin{eqnarray*}
c'\|\beta\|^2\leq c_1Q(\beta,\beta)&=& 
c_1\big (Q(\alpha,\alpha)- Q(\omega\wedge\gamma, 
\omega\wedge\gamma)\big) \\
& \leq& c_1 Q(\alpha,\alpha)+  c_1c\|\gamma\|^2\\
&\leq& c_1 Q(\alpha,\alpha)+ c_1c^3\|\gamma\wedge\Omega\wedge \omega^2\|^2\\
&=& c_1Q(\alpha,\alpha)+ c_1c^3\|\alpha\wedge\Omega\wedge \omega\|^2.
\end{eqnarray*}
We then deduce the estimate in the proposition by taking $c_2:=c'c^2+c_1c^3$.
\endproof

\section{Lefschetz and Hodge-Riemann theorems} \label{section_hr}

In this section, we prove Theorem \ref{th_main_1}. 
Corollary \ref{cor_hr_mixed} is then deduced from that theorem and Proposition \ref{prop_timorin}. 
We will use the results of the last section for $E$ the complex cotangent space of 
$X$ at a point and $\omega$ the 
K\"ahler form on $X$. So, we can define at every point of $X$ a Hodge-Riemann cone for bidegree $(p,q)$. We now use the notation in Theorem \ref{th_main_1}. Let $ \Ec^{p,q}$ (resp. $L^2_{p,q}$) denote the spaces of smooth (resp. $L^2$) forms on $X$ of bidegree $(p,q)$. 

\begin{proposition}\label{prop_ddc}
Assume that $p,q\geq 1$.
Then, for every closed form $f\in  \Ec^{p,q}(X)$ such that  $ \{f\}\in H^{p,q}(X,\C)_\prim,$
there is a form $u\in L^2_{p-1,q-1}(X)$ such that  
\begin{equation*}
\ddc u\wedge\Omega\wedge\omega=f\wedge\Omega\wedge\omega.
\end{equation*}
\end{proposition}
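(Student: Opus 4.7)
The plan is an $L^2$ Hilbert-space Riesz argument driven by the integrated coercivity estimate of Proposition~\ref{prop_hr_linear}, supplemented by the $\ddc$-lemma to exploit the primitivity of $\{f\}$.

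First I would introduce on $L^2_{p,q}(X)$ the Hermitian form
$$\tilde a(\alpha,\beta):=c_1\,\tilde Q(\alpha,\beta)+c_2\,\langle\alpha\wedge\Omega\wedge\omega,\,\beta\wedge\Omega\wedge\omega\rangle_{L^2},$$
where $\tilde Q(\alpha,\beta):=i^{q-p}(-1)^{(p+q)(p+q+1)/2}\int_X\alpha\wedge\overline\beta\wedge\Omega$ and $c_1,c_2>0$ are the constants of Proposition~\ref{prop_hr_linear}. Integrating the pointwise estimate shows that $\tilde a$ is equivalent to the standard $L^2$ inner product on $L^2_{p,q}(X)$. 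Let $K\subset L^2_{p,q}(X)$ be the $L^2$-closure of $\{\ddc v:v\in\Ec^{p-1,q-1}(X)\}$. The functional $\ell_f(\xi):=\tilde a(f,\xi)$ is bounded on $K$, so Riesz yields $\xi\in K$ with $\tilde a(\xi-f,\ddc v)=0$ for all smooth $v$; by closed range of $\ddc$ in $L^2$ on a compact K\"ahler manifold, $\xi=\ddc u$ for some $u\in L^2_{p-1,q-1}(X)$.

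Next I would simplify the orthogonality by integration by parts. Because $\Omega,\omega$ are $d$-closed (hence $\ddc$-closed), $\ddc\ddc=0$, and $X$ is closed, Stokes gives $\tilde Q(\ddc u',\ddc v)=0$ for all smooth $u',v$, and $\tilde Q(f,\ddc v)=0$ since $df=0$; both identities extend to $\xi=\ddc u\in L^2$ by density. Thus the Riesz identity collapses to
$$\bigl\langle(\ddc u-f)\wedge\Omega\wedge\omega,\;\ddc v\wedge\Omega\wedge\omega\bigr\rangle_{L^2}=0\quad\forall\,v\in\Ec^{p-1,q-1}(X).$$
The hypothesis $\{f\}\smile\{\Omega\}\smile\{\omega\}=0$ makes $f\wedge\Omega\wedge\omega$ a $d$-closed, $d$-exact form, so the $\ddc$-lemma furnishes a smooth $\sigma\in\Ec^{n-q,n-p}(X)$ with $f\wedge\Omega\wedge\omega=\ddc\sigma$. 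Using $\ddc u\wedge\Omega\wedge\omega=\ddc(u\wedge\Omega\wedge\omega)$ (as $\Omega\wedge\omega$ is closed), one finds $\eta:=(\ddc u-f)\wedge\Omega\wedge\omega=\ddc\tau$ with $\tau:=u\wedge\Omega\wedge\omega-\sigma$, so $\eta$ is $\ddc$-exact.

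The main obstacle is promoting the orthogonality to the pointwise identity $\eta=0$. The test set $\{\ddc(v\wedge\Omega\wedge\omega):v\}$ is a proper subspace of $\ddc\Ec^{n-q,n-p}$ since $v\mapsto v\wedge\Omega\wedge\omega$ is pointwise injective but not surjective onto $V^{n-q,n-p}$. The planned resolution exploits the freedom to modify $\sigma$ by a $\ddc$-closed form together with the pointwise decomposition $V^{n-q,n-p}=\{u\wedge\Omega\wedge\omega\}\oplus\ker(\cdot\wedge\omega)$ inherited from the Lefschetz isomorphisms of Definition~\ref{def_hr}, to arrange for $\tau$ to lie in the image of $v\mapsto v\wedge\Omega\wedge\omega$. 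Testing the displayed orthogonality with the corresponding $v$ then produces $\|\eta\|^2_{L^2}=\langle\ddc\tau,\ddc\tau\rangle=0$. Making this adjustment rigorous, which is where the full Hodge-Riemann hypothesis on $\Omega$ is genuinely used, is the principal difficulty I anticipate.
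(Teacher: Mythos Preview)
Your approach diverges from the paper's and, as you yourself flag, leaves a real gap at the end. The orthogonality you obtain,
\[
\bigl\langle(\ddc u-f)\wedge\Omega\wedge\omega,\;\ddc v\wedge\Omega\wedge\omega\bigr\rangle_{L^2}=0\quad\text{for all smooth }v\in\Ec^{p-1,q-1}(X),
\]
is genuinely too weak to force $\eta:=(\ddc u-f)\wedge\Omega\wedge\omega=0$. Your proposed fix---decompose $\tau$ pointwise as $v\wedge\Omega\wedge\omega+\rho$ with $\rho\wedge\omega=0$ and then test against $v$---runs into two problems. First, the piece $u$ of $\tau$ is only in $L^2$, so you cannot simply plug it in as a smooth test form; you would need an approximation argument that is not supplied. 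Second, and more seriously, even granting that, you are left with $\langle\eta,\eta\rangle=\langle\eta,\ddc\rho\rangle$, and there is no evident reason the latter vanishes: $\rho$ being $\omega$-primitive in $V^{n-q,n-p}$ does not make $\ddc\rho$ orthogonal to the range of $\cdot\wedge\Omega\wedge\omega$ in $V^{n-q+1,n-p+1}$. Modifying $\sigma$ by a $\ddc$-closed form does not help either, since that changes $\tau$ but does not in general kill the $\rho$-component.

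The paper bypasses this difficulty by \emph{dualizing}. It applies the estimate of Proposition~\ref{prop_hr_linear} not to a general form but specifically to $\ddc\alpha$; the crucial observation is that $Q(\ddc\alpha,\ddc\alpha)=0$ by Stokes, so the integrated inequality collapses to the clean coercivity
\[
\|\ddc\alpha\|_{L^2}\leq c\,\|\ddc\alpha\wedge\Omega\wedge\omega\|_{L^2}.
\]
Then, writing $f\wedge\Omega\wedge\omega=\ddc g$ via the $\ddc$-lemma (this is where primitivity enters), one bounds the linear functional
\[
\ddc\alpha\wedge\Omega\wedge\omega\;\longmapsto\;(-1)^{p+q+1}\int_X\alpha\wedge f\wedge\Omega\wedge\omega
=(-1)^{p+q+1}\int_X\ddc\alpha\wedge g
\]
by $c\|g\|_{L^2}\|\ddc\alpha\wedge\Omega\wedge\omega\|_{L^2}$. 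A single application of Hahn--Banach extends this to all of $L^2_{n-p+1,n-q+1}(X)$, and the Riesz representative $u\in L^2_{p-1,q-1}(X)$ satisfies the desired equation directly, with no residual orthogonality to upgrade. The moral is that the vanishing of the $Q$-term on $\ddc$-exact forms is what makes the estimate useful, and it is most naturally exploited on the dual side rather than through a projection argument.
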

\proof
Consider the subspace $H$ of $L^2_{n-p+1,n-q+1}(X)$  defined by
$$H:=\left\lbrace  \ddc \alpha\wedge\Omega\wedge\omega:\  \alpha\in  \Ec^{q-1,p-1}(X)\right\rbrace$$
and the linear form $h$ on $H$ given by
$$h(\ddc \alpha\wedge\Omega\wedge\omega):= (-1)^{p+q+1}\int_{X}\alpha\wedge f\wedge  \Omega\wedge\omega. $$
We prove that  $h$ is a well-defined bounded linear form with respect to the $L^2$-norm restricted to $H$.

We claim that there is a constant $c>0$ such that 
$$\|\ddc \alpha\|_{L^2}\leq c \|\ddc \alpha\wedge\Omega\wedge\omega\|_{L^2}.$$
Indeed, we use the inequality in Proposition \ref{prop_hr_linear} applied to $\ddc\alpha$ instead of $\alpha$ and the complex cotangent spaces of $X$ instead of $E$. Since $X$ is compact, we can find common constants $c_1$ and $c_2$ for all cotangent spaces. We then integrate over $X$ and obtain
$$\|\ddc\alpha\|_{L^2}\leq c_1Q(\ddc\alpha,\ddc\alpha)
+c_2\|\ddc\alpha\wedge \Omega\wedge\omega\|_{L^2}^2,$$
where $Q$ is defined in Section \ref{introduction}.  Using Stokes' formula, we obtain
 $$Q(\ddc \alpha,\ddc\alpha)= i^{q-p}(-1)^{(p+q)(p+q+1)\over 2}
     \int_X \ddc \alpha\wedge\ddc\overline\alpha\wedge\Omega=0.$$
We then deduce easily the claim.

Now, by hypothesis the smooth form $f\wedge\Omega\wedge\omega$ is exact. Therefore, 
there is a form $g\in  \Ec^{n-q,n-p}(X)$ such that 
$$\ddc g =f\wedge\Omega\wedge\omega,$$
see e.g. \cite[p.41]{bdi}.
Using again Stokes' formula and the above claim, we obtain  
 \begin{eqnarray*}
 \Big|\int_X\alpha\wedge f\wedge  \Omega\wedge\omega\Big|
& = & \Big|\int_X\alpha\wedge \ddc g\Big|=\Big|\int_X\ddc \alpha\wedge  g\Big| \\
 & \leq &   \|g\|_{L^2}\|\ddc\alpha\|_{L^2}  \leq 
 c\|g\|_{L^2}\|\ddc \alpha\wedge\Omega\wedge\omega\|_{L^2}.
 \end{eqnarray*}
It follows that $h$ is a well-defined form whose norm in $L^2$ is bounded by $c\|g\|_{L^2}$.

By Hahn-Banach theorem, we can extend $h$ to a bounded linear form  
on $L^2_{n-p+1,n-q+1}(X)$.
Let $u$ be a form in  $L^2_{p-1,q-1}(X)$  that represents $h$. It follows from the definition of $h$ that \begin{equation*} 
\int_X u\wedge \ddc\alpha \wedge\Omega\wedge\omega = (-1)^{p+q+1}\int_X\alpha\wedge f\wedge  \Omega\wedge\omega = -\int_X f\wedge\alpha\wedge\Omega\wedge\omega
\end{equation*}
for all test forms $\alpha\in  \Ec^{q-1,p-1}(X)$. The form $u$ satisfies the proposition.
\endproof

We have the following result.

\begin{proposition}\label{prop_ddc_smooth}
Let $u$ be as in  Proposition \ref{prop_ddc}. Then 
there is a form $v\in  \Ec^{p-1,q-1}(X)$ such that  $\ddc v=\ddc u$.
 \end{proposition}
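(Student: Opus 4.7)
The plan rests on a vanishing lemma: \emph{any $\ddc$-exact $L^2$-current $T$ of bidegree $(p,q)$ satisfying $T\wedge\Omega\wedge\omega=0$ vanishes}. To prove it, I apply Proposition~\ref{prop_hr_linear} pointwise to $T$; since $T\wedge\Omega\wedge\omega=0$, the inequality reduces to $\|T\|^2\le c_1 Q(T,T)$, and integrating yields $\|T\|_{L^2}^2\le c_1 Q_X(T,T)$ where $Q_X(T,T)$ is proportional to $\int_X T\wedge\overline T\wedge\Omega$. Writing $T=\ddc w$ for an $L^2$-current $w$ and mollifying $w$ to justify Stokes' formula, this integral becomes $\mathrm{const}\int_X w\wedge\ddc(\ddc\overline w\wedge\Omega)=0$, using $\ddc\ddc=0$ and $d\Omega=0$ to see that $\ddc(\ddc\overline w\wedge\Omega)=0$. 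Hence $T=0$.

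As a first step, I would upgrade $\ddc u$ from a distribution to an $L^2$-form. Mollifying $u$ to smooth $u_\epsilon$ in $L^2$ and applying Proposition~\ref{prop_hr_linear} pointwise to each $\ddc u_\epsilon$, then integrating, the $Q_X$-term vanishes by the same Stokes computation as in the proof of Proposition~\ref{prop_ddc}, while the remainder is bounded by $c_2\|f\wedge\Omega\wedge\omega\|_{L^2}^2<\infty$. Weak $L^2$-compactness then gives $\ddc u\in L^2_{p,q}(X)$.

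Next I would construct a smooth $v\in\Ec^{p-1,q-1}(X)$ solving $\ddc v\wedge\Omega\wedge\omega=f\wedge\Omega\wedge\omega$ by Hahn-Banach duality on Fr\'echet spaces. The formal transpose of $L(v):=\ddc v\wedge\Omega\wedge\omega$ is $L^T(\beta)=\Omega\wedge\omega\wedge\ddc\beta$, and the vanishing lemma applied to the smooth $\ddc$-exact primitive form $\ddc\beta$ gives $\ker L^T=\{\beta\in\Ec^{q-1,p-1}(X):\ddc\beta=0\}$. Since $\{f\}$ is primitive, there is a smooth $g$ with $\ddc g=f\wedge\Omega\wedge\omega$; for $\beta\in\ker\ddc$ a careful double integration by parts (using $\partial\dbar\beta=0$ to eliminate the cross-terms $\int\partial g\wedge\dbar\beta$ and $\int\dbar g\wedge\partial\beta$) gives
\[
\int_X f\wedge\Omega\wedge\omega\wedge\beta \;=\; \int_X\ddc g\wedge\beta \;=\; 0.
\]
Together with the closed range of $L$, which comes from the coercive estimate $\|\ddc\alpha\|_{L^2}\le c\|\ddc\alpha\wedge\Omega\wedge\omega\|_{L^2}$ already proven in Proposition~\ref{prop_ddc}, this orthogonality places $f\wedge\Omega\wedge\omega$ in $\mathrm{Im}(L)$ and produces the required smooth $v$.

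Finally, the $L^2$-current $T:=\ddc(u-v)$ is $\ddc$-exact and satisfies $T\wedge\Omega\wedge\omega=0$, so the vanishing lemma forces $\ddc v=\ddc u$. The main obstacle I anticipate is verifying the closed range of $L$ on smooth (rather than $L^2$) forms; the pointwise Hodge-Riemann inequality provides coercivity only in $L^2$, so upgrading to a smooth preimage will require combining this estimate with standard elliptic regularity on the compact K\"ahler manifold $X$.
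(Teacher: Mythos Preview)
Your plan has a genuine gap at step~3, which you yourself flag but do not close. The operator $L(v)=\ddc v\wedge\Omega\wedge\omega$ is not elliptic, and the coercive estimate $\|\ddc\alpha\|_{L^2}\le c\|\ddc\alpha\wedge\Omega\wedge\omega\|_{L^2}$ from Proposition~\ref{prop_ddc} controls only the $L^2$-norm of $\ddc\alpha$, not any higher Sobolev norm. It therefore yields closed range in $L^2$ but says nothing about closed range in the Fr\'echet space $\Ec^{n-q+1,n-p+1}(X)$. Your proposed remedy, ``combining this estimate with standard elliptic regularity,'' is essentially circular: what you would need is that an $L^2$-limit $T$ of forms $\ddc v_n$ with $T\wedge\Omega\wedge\omega$ smooth is itself smooth, and that is exactly the regularity statement the proposition is asserting. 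Without a smooth $v$ in hand, your vanishing lemma never fires and the argument does not conclude. (There are also mollification issues in steps~1 and~2 when justifying Stokes' formula for $L^2$ currents---e.g.\ smoothing $u$ gives $\ddc u_\epsilon\to\ddc u$ only in negative Sobolev spaces, not in $L^2$---but these are technical rather than structural.)

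The paper bypasses the Fr\'echet closed-range problem entirely by working directly with $h:=\ddc u-f$. It builds a Hermitian inner product on $\Ec^{p,q}(X)$ adapted to the decomposition $\alpha=\beta+\omega\wedge\gamma$ of Proposition~\ref{prop_hr_linear}, namely $\langle\alpha,\alpha'\rangle:=Q(\beta,\beta')+\langle\gamma,\gamma'\rangle$. The current $h$ is $\dbar$-closed, $\partial$-closed, and primitive ($h\wedge\Omega\wedge\omega=0$); primitivity forces $\langle\dbar\alpha,h\rangle=Q(\dbar\alpha,h)$ for every test form $\alpha$, and this vanishes by Stokes since $\partial\overline h=0$. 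Hence $\dbar^{*}h=0$ for this metric, so $h$ is harmonic for the associated $\dbar$-Laplacian, which \emph{is} a genuinely elliptic second-order operator regardless of the metric chosen. Elliptic regularity then makes $h$, and hence $\ddc u=h+f$, smooth; the $\ddc$-lemma supplies the smooth potential $v$. The idea you are missing is this tailored inner product that converts the primitivity condition $h\wedge\Omega\wedge\omega=0$ into $\dbar^{*}$-closedness, thereby placing $h$ in the kernel of an elliptic operator.
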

 \begin{proof}
 We can assume without loss of generality that  $p\leq q$. The idea is to use the ellipticity of the Laplacian operator associated with $\dbar$ and a special inner product on $ \Ec^{p,q}(X)$. 
 We first construct this inner product. Fix an arbitrary Hermitian metric
 on the vector bundle $\bigwedge^{r,s}(X)$ of differential 
 $(r,s)$-forms on $X$  with $(r,s)\not=(p,q)$ and denote by 
 $\langle\cdot,\cdot\rangle$ the associated inner product on
 $ \Ec^{r,s}(X)$.

Using  the first assertion in Proposition \ref{prop_hr_linear}, for any $\alpha,\alpha'\in  \Ec^{p,q}(X)$, we can write in a unique way
$$\alpha=\beta+\omega\wedge\gamma \quad \mbox{and}\quad \alpha'=\beta'+\omega\wedge\gamma'$$ 
 with $\beta,\beta'\in \Ec^{p,q}(X)$ and $\gamma,\gamma'\in  \Ec^{p-1,q-1}(X)$  such that $\beta\wedge \Omega\wedge \omega=0$ 
 and  $\beta'\wedge \Omega\wedge \omega=0$. 
Define an inner product $\left <\cdot, \cdot\right>$ on $ \Ec^{p,q}(X)$  by setting
 $$\langle \alpha, \alpha'\rangle := Q(\beta,\beta')+\langle \gamma,\gamma'\rangle
 =Q(\alpha,\beta')+\langle \gamma,\gamma'\rangle.$$
This inner product is associated with a Hermitian metric on $\bigwedge^{p,q}(X)$. 

Using the positivity of $Q$ given in Proposition \ref{prop_hr_linear}, we see that
$\langle\cdot, \cdot\rangle$ defines a Hermitian metric on $ \Ec^{p,q}(X)$. Consider now the norm 
$\|\alpha\|:=\sqrt{\langle\alpha, \alpha\rangle}$. Then there is a constant $c>0$ such that
$$c^{-1} \big (\|\beta\|_{L^2}+ \|\gamma\|_{L^2} \big) \leq  \|\alpha\|\leq c \big (\|\beta\|_{L^2}+ \|\gamma\|_{L^2} \big).$$

Consider the $(p,q)$-current $h:=\ddc u-f$ which belongs to a Sobolev space. We have
 $$\dbar h=0,\quad \partial h=0\quad  \text{and}\quad   h\wedge \Omega\wedge \omega=0.$$
 The last identity says that if we decompose $h$ as we did above for $\alpha,\alpha'$, the second component in the decomposition vanishes. Therefore, $\langle\dbar\alpha, h\rangle= Q(\dbar\alpha, h)$ for any form $\alpha\in  \Ec^{p,q-1}(X)$. Using Stokes' formula, we obtain 
 $$\langle\dbar\alpha, h\rangle= Q(\dbar\alpha, h)=
 i^{q-p}(-1)^{p+q-1+{(p+q)(p+q+1)\over 2}}\int_X\alpha\wedge \dbar\overline h\wedge\Omega =0.$$
If $\dbar^*$ is the adjoint of $\overline{\partial}$
with respect to the considered inner products, we deduce that $\dbar^* h=0$.
 On the other hand,  $\dbar h=0$.
 Therefore, $h$ is a harmonic current with respect to the Laplacian operator
 $\dbar\dbar^*+\dbar^*\dbar$, see Section 5 in \cite[Chap. IV]{Wells}.
 Consequently, by elliptic regularity, 
 $h$ is smooth, see e.g. Theorem 4.9 in \cite[Chap. IV]{Wells}). Hence, $\ddc u$ is smooth.
 We deduce the existence of
 $v\in \Ec^{p-1,q-1}(X)$ such that  $\ddc v=\ddc u,$  see e.g. \cite[p.41]{bdi}.
 \end{proof}
 
\noindent
{\bf  End of the proof of Theorem \ref{th_main_1}.}
Let  $f$   be a closed form in $  \Ec^{p,q}(X)$ such that  $ \{f\}\in H^{p,q}(X,\C)_\prim$.
We first show that   $Q(\{f\},\{f\})\geq 0$. Let $v$ be the smooth $(p-1,q-1)$-form  given by  Proposition \ref{prop_ddc_smooth}.  
Then we have
$$(f-\ddc v)\wedge  \Omega\wedge\omega=0.$$
Here, we should replace $\ddc v$ with 0
when either $p=0$ or $q=0$.
Using Proposition \ref{prop_hr_linear} to each point of $X$,
after an integration on $X$, we obtain 
$$ i^{q-p}(-1)^{\frac{(p+q)(p+q+1)}{2}  }\int_{X} (f-\ddc v)\wedge (\overline{f}-\ddc \overline{v})\wedge \Omega \geq 0.$$
Using Stokes' formula and that $f$ is closed, we obtain 
$$\int_{X} f \wedge \overline{f} \wedge \Omega= \int\limits_{X} (f-\ddc v)\wedge (\overline{f}-\ddc \overline{v})\wedge \Omega .$$
Therefore, $Q(\{f\},\{f\})\geq 0$. The equality occurs if and only if $f=\ddc v$,  i.e.
$\{f\}=0.$  Hence, $\{\Omega\}$ satisfies the Hodge-Riemann theorem for the bidegree $(p,q)$.  

We deduce that the map $\{\alpha\}\mapsto \{\alpha\}\smile \{\Omega\}$ is injective on $H^{p,q}(X,\C)_\prim$. 
If $\{\alpha\}$ is a class in $H^{p,q}(X,\C)$ such that $\{\alpha\}\smile \{\Omega\}=0$,  $\{\alpha\}$ is a primitive class and hence $\{\alpha\}=0$. Therefore, $\{\Omega\}$ satisfies the hard Lefschetz theorem for the bidegree $(p,q)$. 

The classical hard Lefschetz theorem implies that  $\{\alpha\}\mapsto \{\alpha\}\smile \{\omega\}$ is an 
injective map from $H^{p-1,p-1}(X,\C)$ to $H^{p,q}(X,\C)$. Therefore,
$$\dim \{\omega\}\smile H^{p-1,q-1}(X,\C)=\dim H^{p-1,q-1}(X,\C).$$
This Lefschetz theorem also implies that 
$\{\alpha\}\mapsto \{\alpha\}\smile \{\omega\}$ is a surjective map from $H^{n-q,n-p}(X,\C)$ to  $H^{n-q+1,n-p+1}(X,\C)$. 
This together with the hard Lefschetz theorem for $\{\Omega\}$ yield 
\begin{eqnarray*}
\dim H^{p,q}(X,\C)_\prim & = & \dim H^{p,q}(X,\C)-\dim H^{n-q+1,n-p+1}(X,\C) \\
& = & \dim H^{p,q}(X,\C)-\dim H^{p-1,q-1}(X,\C)\\
& = & \dim H^{p,q}(X,\C)-\dim \{\omega\}\smile H^{p-1,q-1}(X,\C).
\end{eqnarray*}

The hard Lefschetz theorem can also be applied to $\{\Omega\wedge\omega^2\}$ 
and to the bidegree $(p-1,q-1)$. We deduce that 
the intersection of $\{\omega\}\smile H^{p-1,q-1}(X,\C)$ and $H^{p,q}(X,\C)_\prim$ is reduced to 0.
This together with the above dimension computation  gives us 
the following decomposition into a direct sum
$$H^{p,q}(X,\C)=\{\omega\}\smile H^{p-1,q-1}(X,\C)+H^{p,q}(X,\C)_\prim.$$
Finally, the previous decomposition is orthogonal with respect to $Q$ by definition of primitive space. So, $\{\Omega\}$ satisfies the Lefschetz decomposition theorem.
 \hfill $\square$

 \begin{remark} \rm
 In order to obtain the Hodge-Riemann theorem and the hard Lefschetz theorem (resp. the Lefschetz decomposition), it is enough to assume the property $(*)$ in Definition \ref{def_hr} for $r=0,1$ (resp. $r=0,1,2$). 
 When $(*)$ is satisfied for all $r$, we can apply inductively these theorems to $\Omega\wedge\omega^{2r}$ and then obtain the signature of $Q$ on $H^{p,q}(X,\C)$.  
 \end{remark}

\small

\noindent
T.-C. Dinh, UPMC Univ Paris 06, UMR 7586, Institut de
Math{\'e}matiques de Jussieu, 4 place Jussieu, F-75005 Paris, France.\\ 
{\tt dinh@math.jussieu.fr}, {\tt http://www.math.jussieu.fr/$\sim$dinh}

\medskip

\noindent
V.-A.  Nguy{\^e}n,
Math{\'e}matique - B{\^a}timent 425, Universit{\'e} Paris-Sud, 
91405 Orsay, France.\\ 
{\tt  VietAnh.Nguyen@math.u-psud.fr}.

\end{document}